\documentclass[11pt]{amsart}

\usepackage[T1]{fontenc}
\usepackage{lmodern}
\usepackage{amsmath,amssymb,amsthm,mathtools}
\usepackage{microtype}
\usepackage{xcolor}
\usepackage[colorlinks=true,linkcolor=blue!55!black,citecolor=green!45!black,
            urlcolor=blue!65!black]{hyperref}

\numberwithin{equation}{section}
\allowdisplaybreaks

\newtheorem{theorem}{Theorem}[section]

\newtheorem{lemma}[theorem]{Lemma}
\newtheorem{corollary}[theorem]{Corollary}

\theoremstyle{remark}
\newtheorem{remark}[theorem]{Remark}

\title[Quantized volume comparison]{Quantized Volume Comparison\\
for Fano Manifolds, II}

\author{Kaixuan Lyu and Kewei Zhang}
\address{School of Mathematical Sciences, Beijing Normal University, Beijing 100871, China}
\email{17852841271@163.com, kwzhang@bnu.edu.cn}

\begin{document}

\begin{abstract}
In this note, the second author's quantized volume comparison conjecture is solved: If $X$ is a $K$-semistable Fano manifold of dimension $n$, then for every
integer $m\geq1$,
\[
 h^0(X,-mK_X)\leq
 h^0(\mathbb P^n,-mK_{\mathbb P^n})
 =\binom{n+m(n+1)}{n},
\]
and equality for one $m$ characterizes projective space. Somewhat surprisingly, the same statement actually holds whenever $T_X$ is
slope semistable with respect to $-K_X$.  The idea is to apply a jet-dimension counting trick to a filtration of subsheaves induced by $H^0(X,-mK_X)$. Then the slope semistability condition yields the desired dimension bound.
\end{abstract}

\maketitle

\section{Introduction}
\label{sec:introduction}

Let $X$ be a complex Fano manifold of dimension $n$.  A theorem of
K. Fujita \cite{Fujita2018} says that if $X$ is $K$-semistable, then
\[
   (-K_X)^n\leq (n+1)^n=(-K_{\mathbb P^n})^n,
\]
and equality holds if and only if $X\simeq \mathbb P^n$.  Since
asymptotic Riemann--Roch gives
\[
 h^0(X,-mK_X)=\frac{(-K_X)^n}{n!}m^n+O(m^{n-1}),
\]
it is natural to ask whether Fujita's comparison admits a
finite-level, or quantized, refinement.

The second author formulated precisely this question in
\cite[Conjecture~1.2]{Zhang2025}.  He conjectured that every
$K$-semistable Fano manifold satisfies
\begin{equation}
  h^0(X,-mK_X)
  \leq h^0(\mathbb P^n,-mK_{\mathbb P^n})
  =\binom{n+m(n+1)}{n}
  \qquad\text{for every }m\geq 1,
  \label{eq:quantized-comparison-intro}
\end{equation}
with equality, for one value of $m$, only for projective space. 

This expectation was actually inspired by a conjecture of Yau in the non-compact setting, stating that
\begin{equation}
    \label{eq:Yau}
    \dim_\mathbb C\mathcal{O}_d(M^n)\leq \dim_\mathbb C\mathcal{O}_d(\mathbb C^n)=\binom{n+d}{n}\ \text{for every }d>0,
\end{equation}
where $M^n$ is a complete noncompact K\"ahler manifold with nonnegative holomorphic
bisectional curvature and $\mathcal{O}_d(M)$ denotes the space of holomorphic functions on $M$ with polynomial growth of order at most $d$. Regarding this problem,
Mok had earlier obtained related, non-sharp polynomial dimension estimates
under additional curvature-decay and volume-growth assumptions
\cite{Mok1984Embedding}. Ni obtained the optimal-order estimate
$\dim_{\mathbb C}\mathcal O_d(M)\leq C(n)d^n$ for $d\geq1$ in general
and proved Yau's sharp comparison under the additional assumption of
maximal volume growth \cite{Ni2004Monotonicity}. Chen--Fu--Yin--Zhu
subsequently removed that assumption and established the full rigidity
statement: the inequality \eqref{eq:Yau} holds true and equality happens only when $M$ is biholomorphically isometric to $\mathbb{C}^n$, thereby completing Yau's original bisectional-curvature
conjecture \cite{ChenFuYinZhu2006}. Liu later gave a different proof using his three-circle theorem and
strengthened the result by replacing nonnegative holomorphic bisectional
curvature with the strictly weaker assumption of nonnegative holomorphic
sectional curvature \cite{Liu2016ThreeCircle}.

In \cite{Zhang2025}, the second author initiated the study of the analogous problem in the compact setting, where he proved the comparison \eqref{eq:quantized-comparison-intro} uniformly for all sufficiently large $m$ and
proved a conditional reduction
in dimension four \cite[Theorem~1.1 and
Proposition~2.4]{Zhang2025}.

In this note we provide a complete solution to this problem.

\begin{theorem}\label{thm:main-intro}
Let $X$ be an $n$-dimensional smooth complex $K$-semistable Fano
variety.  Then, for every integer $m\geq 1$,
\[
  h^0(X,-mK_X)\leq \binom{n+m(n+1)}{n}.
\]
If equality holds for some $m\geq1$, then $X\simeq\mathbb P^n$.
\end{theorem}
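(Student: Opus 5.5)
The plan is to go through slope semistability of the tangent bundle, as the abstract suggests, and then bound $h^0(X,-mK_X)$ by a jet-counting argument. Put $H=-K_X$ and $L=-mK_X=mH$. First I invoke the known fact that a $K$-semistable Fano manifold has $T_X$ slope semistable with respect to $-K_X$. This follows from the Kähler–Einstein case by a degeneration/openness argument, or from the valuative criterion. Since we are in characteristic $0$, every symmetric power $S^j\Omega_X$ is then semistable. Because $c_1(\Omega_X)=-H$, we have $\mu(S^j\Omega_X\otimes L)=(m-j/n)\,H^n/(\text{normalisation})$. Call this slope $\mu_j$; it is proportional to $m-j/n$. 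From here on only semistability is used, so the argument proves the stronger statement in the abstract.

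Next I set up the jet filtration. Let $V=H^0(X,L)$ and, for $k\gg0$, consider the evaluation map $V\otimes\mathcal O_X\to J^k(L)$. For $k$ large this map is generically injective: a section vanishing to order $>k$ at a general point vanishes identically once $k$ exceeds the maximal vanishing order. Let $\mathcal F\subset J^k(L)$ be its image. Then $\mathcal F$ is generically generated by global sections, so $c_1(\mathcal F)\cdot H^{n-1}\ge 0$, and $\operatorname{rk}\mathcal F=\dim V$. The sheaf $J^k(L)$ carries the filtration with graded pieces $S^j\Omega_X\otimes L$ for $0\le j\le k$. Intersecting $\mathcal F$ with this filtration and saturating gives subsheaves $\mathcal G_j\subset S^j\Omega_X\otimes L$ of ranks $r_j$, with $\sum_j r_j=\dim V$ and $0\le r_j\le \binom{n-1+j}{n-1}$. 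Semistability gives $\deg \mathcal G_j\le r_j\mu_j$. Hence
\[
0\le \sum_j r_j\Big(m-\frac jn\Big).
\]

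I then turn this into an optimisation problem: maximise $\sum r_j$ subject to the constraint above and the caps $r_j\le\binom{n-1+j}{n-1}$. Because the weights $m-j/n$ decrease in $j$, the maximum is attained by filling the lowest degrees first, with $r_j$ equal to the full binomial for $j\le d$. The identity
\[
\sum_{j\le d}\binom{n-1+j}{n-1}\Big(m-\frac jn\Big)=\binom{n+d}{n}\Big(m-\frac d{n+1}\Big)
\]
shows that the constraint stays nonnegative exactly up to $d=m(n+1)$. So $\dim V\le\binom{n+m(n+1)}{n}$.

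There is a subtlety in this step. Greedy filling past $d$ with partial $r_j$ could a priori gain a little, because at $d=m(n+1)$ the weighted sum is exactly $0$ and each later weight is negative. I will check that partial filling cannot beat the bound; it seems clean because the budget is used up exactly at $d=m(n+1)$. If needed, I will also use a Macaulay-type constraint on the $r_j$, which holds since the leading terms at a point form a module closed under differentiation.

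For rigidity: equality forces $r_j$ to be full for all $j\le m(n+1)$ and zero afterwards, and forces $\deg\mathcal G_j=r_j\mu_j$ with $\mathcal G_j=S^j\Omega_X\otimes L$. Then $\mathcal F$ is essentially all of $J^{m(n+1)}(L)$ and has degree $0$, so $J^{m(n+1)}(L)$ is generically generated by global sections with trivial determinant. That makes it trivial, and in particular $J^1(L)$ is trivial, i.e.\ $L$ separates $1$-jets with sections spanning $J^1$. I expect to conclude $X\simeq\mathbb P^n$ by a characterisation such as Kobayashi–Ochiai or Mori's theorem, using the resulting lines of $H$-degree $n+1$. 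The main obstacles I anticipate are justifying the degree inequality $c_1(\mathcal F)\cdot H^{n-1}\ge0$ together with the correct saturation bookkeeping in the filtration, and carrying out the equality analysis.
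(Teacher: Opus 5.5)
Your proof of the inequality is correct and is essentially the paper's argument; it is exactly the rank-counting sketched in the paper's introduction. The correspondence is as follows.
\begin{itemize}
\item The graded pieces of $\mathcal F$ for the jet filtration are the paper's $\mathcal Q_j$.
\item Your constraint $\sum_j r_j(m-j/n)\ge 0$ is the paper's bound $T_p(W)\le mnN$, since $\sum_j j\,r_j=T_p(W)$ at a general point.
\item The caps $r_j\le d_j:=\binom{n-1+j}{n-1}$ do the work of Lemma~\ref{lem:taylor-capacity}.
\end{itemize}
Two small remarks. First, $\mathcal F\cong V\otimes\mathcal O_X$, because a generically injective map out of a torsion-free sheaf is injective, so $c_1(\mathcal F)=0$ exactly. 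Saturating is harmless, since the degree of each graded piece is at most $\deg\mathcal G_j\le r_j\mu_j$. Second, the ``subtlety'' you worry about is not one. Put $q=m(n+1)$ and $D:=\binom{n+q}{n}$, and fix $N>D$. The minimum of $\sum_j(j-mn)r_j$ under the caps is attained by filling bottom-up. Since $\sum_{j\le q}(j-mn)d_j=0$, this minimum is at least $(q+1-mn)(N-D)=(m+1)(N-D)>0$. No Macaulay-type input is needed.

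The genuine gap is in the equality case. Up to the triviality of $J^{q}(L)$ your argument works, and it even gives more than the paper's general-point statement:
\begin{itemize}
\item $r_j=0$ for $j>q$ makes $e_q\colon V\otimes\mathcal O_X\to J^q(L)$ injective between bundles of the same rank $D$.
\item $\det J^q(L)$ is a power of $K_X$ with exponent $\sum_{j\le q}d_j(j/n-m)=0$.
\item Hence $\det e_q$ is a nonzero constant, and $e_q$ is an isomorphism at every point.
\end{itemize}

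But ``in particular $J^1(L)$ is trivial'' is false. A quotient of a trivial bundle is only globally generated, and $\det J^1(L)\cong K_X\otimes L^{\otimes(n+1)}\cong\mathcal O_X(-(m(n+1)-1)K_X)$ is ample. For instance, on $\mathbb P^n$ one has $J^1(\mathcal O(d))\cong\mathcal O(d-1)^{\oplus(n+1)}$ for $d\ge 1$. What survives is that $-mK_X$ separates $1$-jets, which holds on many Fano manifolds and cannot characterize $\mathbb P^n$. You also have not produced any of the inputs your proposed endings need:
\begin{itemize}
\item the divisibility $-K_X=(n+1)A$ required for Kobayashi--Ochiai;
\item ampleness of $T_X$ required for Mori's theorem;
\item any curves of $H$-degree $n+1$.
\end{itemize}
Passing to $J^1$ throws away exactly the information that matters.

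The missing step is to use the order-$q$ jet generation directly.
\begin{itemize}
\item $H^0(X,mH)\to L_x\otimes\mathcal O_{X,x}/\mathfrak m_x^{q+1}$ is onto.
\item Given any irreducible curve $C\ni x$, choose a section whose $q$-jet is a degree-$q$ form (for example $\ell^q$) not vanishing on the tangent cone of $C$.
\item Its divisor has multiplicity $q$ at $x$ and does not contain $C$, so $mH\cdot C\ge m(n+1)\operatorname{mult}_xC$.
\item This gives $\varepsilon(-K_X,x)\ge n+1$.
\end{itemize}
The Bauer--Szemberg characterization then gives $X\simeq\mathbb P^n$, which is how the paper concludes. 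Equivalently one can use Cho--Miyaoka--Shepherd-Barron or Kebekus, since every rational curve now has $-K_X$-degree at least $n+1$.
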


In fact, the same result actually holds under the weaker assumption that $T_X$ is slope-semistable. 
The slope stability of tangent bundles of Fano manifolds has a long and
largely independent history.  Tian's work \cite{Tian1992} is the analytic
starting point for the relation between K\"ahler--Einstein geometry and
stability of the tangent bundle.  On the algebraic side,
Peternell--Wi\'sniewski studied Fano manifolds of Picard number one and high
index \cite{PeternellWisniewski1995}, while Steffens analyzed the tangent
bundles of Fano threefolds and related destabilizing subsheaves to extremal
contractions \cite{Steffens1996}. A particularly close predecessor for our main result is Ran's
work: using slope estimates for
sheaves of differential operators, together with some additional assumptions on $X$, he proved the non-sharp bound
\[
  (-K_X)^n\leq(2n)^n
\]
when the tangent sheaf is slope semistable \cite{Ran2002}.  More recent work
includes combinatorial criteria and classifications for smooth toric varieties
\cite{BiswasDeyGencPoddar2021,HeringNillSuess2022}.  Kanemitsu found
Picard-number-one Fano manifolds with unstable tangent bundle, disproving the
formerly expected general stability statement \cite{Kanemitsu2021}.  These
works are principally concerned with deciding when $T_X$ is (semi)stable and
with describing its destabilizing subsheaves.  To the best of our knowledge,
the sharp anticanonical Hilbert-function and volume estimates below have not
previously been recorded under the sole hypothesis that $T_X$ is
$(-K_X)$-slope semistable.  They should be compared with the Fujita--Liu
volume comparison under the stronger hypothesis of $K$-semistability and
with the Tian--Li implication from $K$-semistability to tangent-sheaf
semistability.

Here is the stronger form of
Theorem~\ref{thm:main-intro}.

\begin{theorem}
\label{thm:tangent-semistable-intro}
Let $X$ be a smooth complex Fano manifold of dimension $n$, and assume that
$T_X$ is slope semistable with respect to $H=-K_X$.  Then, for every integer
$m\geq1$,
\[
 h^0(X,-mK_X)
 \leq h^0(\mathbb P^n,-mK_{\mathbb P^n})
 =\binom{n+m(n+1)}{n}.
\]
Consequently,
\[
 (-K_X)^n\leq(n+1)^n.
\]
If equality in the finite-level dimension estimate holds for one
$m\geq1$, or $(-K_X)^n=(n+1)^n$ holds, then $X\simeq\mathbb P^n$.
\end{theorem}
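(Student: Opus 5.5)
The plan is to bound $h^0(X,-mK_X)$ by counting jets at a general point. Semistability of $T_X$ will control how the vanishing orders of sections can be distributed, and we then solve a small extremal problem whose optimum is exactly the $\mathbb P^n$ count.

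Put $L=-mK_X$, $H=-K_X$, $V=H^0(X,L)$ and $N=\dim V$. For $k\ge0$, let $\mathcal V_k\subset V\otimes\mathcal O_X$ be the subsheaf of sections vanishing to order $\ge k$ pointwise, i.e. the kernel of the evaluation map $V\otimes\mathcal O_X\to J^{k-1}(L)$, saturated. The successive quotients $\mathcal V_k/\mathcal V_{k+1}$ map injectively, at a general point and hence as sheaves, into $S^k\Omega_X\otimes L$. Call their images $\mathcal E_k$, with ranks $r_k\le\binom{k+n-1}{n-1}$. Since $V\otimes\mathcal O_X\to J^K(L)$ is generically injective for $K\gg0$, the filtration is finite and $\sum_k r_k=N$. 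Since $V\otimes\mathcal O_X$ is trivial, the degree bookkeeping along the filtration, after comparing saturated pieces and the images $\mathcal E_k$ (torsion and codimension-two issues only help here), gives
\[
\sum_k c_1(\mathcal E_k)\cdot H^{n-1}\;\ge\;0 .
\]

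Next we use the hypothesis. In characteristic zero, slope semistability of $T_X$ with respect to $H$ passes to $\Omega_X$, to $S^k\Omega_X$, and to its twists. Since $\mu_H(\Omega_X)=-H^n/n$, we get
\[
\mu_H\bigl(S^k\Omega_X\otimes L\bigr)=\Bigl(m-\tfrac{k}{n}\Bigr)H^n .
\]
Applying semistability to $\mathcal E_k\subset S^k\Omega_X\otimes L$ and combining with the displayed inequality gives
\[
\sum_k r_k\,(mn-k)\;\ge\;0,\qquad 0\le r_k\le\binom{k+n-1}{n-1}.
\]
Maximizing $\sum r_k$ under these constraints is a linear program, and its optimum fills the lowest orders first. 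Set $d=m(n+1)$ and take $r_k=\binom{k+n-1}{n-1}$ for all $k\le d$. Then the weighted average of $k$ equals $\tfrac{n}{n+1}d=mn$ exactly, which is the standard identity $\sum_{k\le d}k\binom{k+n-1}{n-1}=\tfrac{nd}{n+1}\binom{n+d}{n}$. So the constraint is tight precisely at the full $\mathbb P^n$ configuration. Any other admissible $(r_k)$ either has fewer terms or must pay for terms at $k>d$, which carry negative weight, with deficits at lower $k$. This gives $N\le\binom{n+m(n+1)}{n}$. Dividing by $m^n/n!$ and letting $m\to\infty$ yields $(-K_X)^n\le(n+1)^n$.

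For rigidity, suppose equality holds for some $m$. Then the linear-programming optimum is attained, so $\mathcal E_k=S^k\Omega_X\otimes L$ generically for all $k\le m(n+1)$, and no section vanishes to order $m(n+1)+1$ at a general point. In particular $V$ separates all $m(n+1)$-jets at a general point. I would then conclude in one of two ways. One route applies a Fujita/Kobayashi–Ochiai type characterization, using a general line through a point and the resulting degree count (the "Seshadri constant at a general point is $\ge n+1$" style criterion). The other observes that the equalities in the slope inequalities force all the $\mathcal E_k$, and hence $L$, to behave as on $\mathbb P^n$, and then invokes Kobayashi–Ochiai. If instead $(-K_X)^n=(n+1)^n$, the asymptotic version of the same argument gives the Seshadri constant $\epsilon(-K_X;x)\ge n+1$ at a general point, and Liu–Zhuang's characterization yields $\mathbb P^n$.

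The main obstacle is the degree inequality $\sum_k\deg\mathcal E_k\ge0$. Making it rigorous requires handling saturation, torsion and the comparison between $\mathcal V_k/\mathcal V_{k+1}$ and its image carefully enough that all errors have the right sign. I would first try to work on the big open set where everything is locally free, and use that degrees against $H^{n-1}$ ignore codimension-two loci. A secondary difficulty is extracting the equality case cleanly from the linear-programming tightness.
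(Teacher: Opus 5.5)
Your proof of the inequality is correct and follows the paper's route. The paper filters $H^0(X,-mK_X)\otimes\mathcal O_X$ by jet order, with graded pieces $\mathcal E_k\subseteq\operatorname{Sym}^k\Omega^1_X\otimes L$. It uses semistability of these bundles and telescoping of $c_1$. The telescoping in fact gives $\sum_k\deg_H\mathcal E_k=0$, because $\mathcal V_{k+1}$ is exactly the kernel of $\mathcal V_k\to\operatorname{Sym}^k\Omega^1_X\otimes L$. Your linear program with the bounds $r_k\le\binom{k+n-1}{n-1}$ repackages the paper's cumulative Taylor bound (Lemma~\ref{lem:taylor-capacity}). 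Greedy filling gives $\sum_k kr_k\ge mnD+(m(n+1)+1)(N-D)=mnN+(m+1)(N-D)$, which is the paper's estimate.

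The finite-level equality case is also fine once you state the elementary step the paper uses. If $|mH|$ generates $m(n+1)$-jets at $p$, then for each curve $C\ni p$ you can choose a member whose initial form does not vanish on the tangent cone of $C$. This gives $mH\cdot C\ge m(n+1)\operatorname{mult}_pC$, so $\varepsilon(H,p)\ge n+1$, and Bauer--Szemberg (or Liu--Zhuang) applies. Drop the Kobayashi--Ochiai alternative: nothing in the slope equalities produces an ample $H'$ with $-K_X=(n+1)H'$.

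The genuine gap is the last clause, $(-K_X)^n=(n+1)^n\Rightarrow X\simeq\mathbb P^n$. Volume equality does not give $N_m=D_m$ for any $m$, only $N_m/D_m\to1$, so no finite-level jet generation is available. Asymptotic tightness of your linear program only pins down the limiting distribution of the normalized orders $k/m$, weighted by $r_k/N_m$. That is the information in $x\mapsto\operatorname{vol}(\pi^*H-xE)$ on the blow-up $\pi$ at $p$. It says nothing about $s(mH,p)/m$. Even a deficit $D_m-N_m=1$ leaves enough slack in $\sum_k r_k(mn-k)\ge0$ to have $r_{mn}<\binom{mn+n-1}{n-1}$, hence $s(mH,p)<mn$.

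The missing steps, as in the paper, are these:
\begin{enumerate}
\item Take $p$ \emph{very} general in $\bigcap_m U_m$, since the open sets from the jet-filtration lemma depend on $m$.
\item Read $T_p(W_m)\le mnN_m$ as $S_m(E)\le n$ for the exceptional divisor $E$, and pass to the limit to get $S(E)\le n$, as in Blum--Jonsson.
\item Use Fujita's bound $\operatorname{vol}(\pi^*H-xE)\ge(n+1)^n-x^n$ to get $S(E)\ge n$. Hence $S(E)=n$, which forces $\operatorname{vol}(\pi^*H-xE)=(n+1)^n-x^n$ on $[0,n+1]$.
\item Apply Fujita's theorem that this volume identity forces $\varepsilon(H,p)=n+1$. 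This is the genuinely new input: it turns asymptotic volume information into a Seshadri bound.
\end{enumerate}
Only after step 4 does the Bauer--Szemberg or Liu--Zhuang characterization apply. Your one sentence skips step 4 entirely, and the step-1 issue (general versus very general point) as well.
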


In fact, the anticanonical statement is a specialization of a result for a general
polarization and a general line bundle.  Namely, let $(X,A)$ be a smooth
polarized $n$-fold such that $T_X$ is $\mu_A$-semistable and
\[
 \tau_A(X):=(-K_X)\cdot A^{n-1}>0.
\]
For every line bundle $L$ and every $m\geq1$ with $h^0(X,mL)>0$, we prove
in Theorem~\ref{thm:general-line-bundle-bound} that
\[
 h^0(X,mL)\leq
 \binom{n+
 \left\lceil (n+1)m\,
 \dfrac{L\cdot A^{n-1}}{(-K_X)\cdot A^{n-1}}
 \right\rceil}{n}.
\]
If $L$ is big, Corollary~\ref{cor:general-line-bundle-fano-comparison}
gives the corresponding volume estimate
\[
 \operatorname{vol}(L)\leq(n+1)^n
 \left(
 \frac{L\cdot A^{n-1}}{(-K_X)\cdot A^{n-1}}
 \right)^n.
\]
Thus Theorem~\ref{thm:tangent-semistable-intro} is obtained by taking
$A=L=-K_X$.

Our proof uses slope semistability of the tangent bundle rather than the
valuative criterion directly \cite{Fujita2019Valuative,Li2017Valuative}.  Its analytic ancestor is Tian's theorem on
the canonical extension of the tangent bundle under K\"ahler--Einstein and
Ricci-lower-bound hypotheses \cite[Theorems~0.1--0.2]{Tian1992}.  The exact
modern implication used here is Li's extension of Tian's result:
\cite[Theorem~1.3]{Li2021} proves that $K$-semistability of a smooth Fano
manifold implies that $T_X$ is slope semistable with respect to
$H:=-K_X$.
Fix $m\geq1$, put $L=mH$, and set
\[
  W=H^0(X,L),\qquad N=\dim W.
\]
Let $\mathcal P^j_X(L)$ be the sheaf of principal parts of order $j$.
The kernels of the jet-evaluation maps
\[
  W\otimes\mathcal O_X\longrightarrow \mathcal P^j_X(L)
\]
give a finite decreasing filtration.  Its $j$-th graded image
$Q_j$ is a torsion-free subsheaf of
\[
  \operatorname{Sym}^j\Omega_X^1\otimes L.
\]
These symmetric powers are slope semistable as well. Moreover,
\[
 \mu_H\!\left(\operatorname{Sym}^j\Omega_X^1\otimes L\right)
   =H^n\left(m-\frac{j}{n}\right).
\]
Summing the resulting slope inequalities and using the telescoping of
the first Chern classes of the jet filtration gives, for a general
point $p\in X$,
\begin{equation}
  \sum_{a\geq1}h^0(X,L\otimes\mathfrak m_p^a)
     \leq mnN.
  \label{eq:finite-expected-vanishing-intro}
\end{equation}

The rest of the argument uses ideas from our previous work \cite{Zhang2025}. The space of
homogeneous Taylor coefficients of order $j$ in $n$ variables has
dimension
\[
   d_j=\binom{n+j-1}{n-1}.
\]
Writing $q=m(n+1)$, one has
\[
 \sum_{j=0}^{q}d_j=\binom{n+q}{n},
 \qquad
 \sum_{j=0}^{q}j\,d_j=mn\binom{n+q}{n}.
\]
The rank of the $j$-th graded piece of the jet filtration is at most
$d_j$.  Consequently, if $N>\binom{n+q}{n}$, even the smallest possible
weighted sum of the jet orders is strictly larger than $mnN$, in
contradiction with \eqref{eq:finite-expected-vanishing-intro}.  This
proves \eqref{eq:quantized-comparison-intro}.

The equality case is also handled as in \cite{Zhang2025}: we have that
\[
 H^0(X,L)\longrightarrow
 L_p\otimes\mathcal O_{X,p}/\mathfrak m_p^{q+1}
\]
is an isomorphism, so $L$ generates $q=m(n+1)$ jets at $p$.  It follows
that $\varepsilon(-K_X,p)\geq n+1$.  The characterization of projective
space by the anticanonical Seshadri constant
\cite[Theorem~1.7]{BauerSzemberg2009} then gives
$X\simeq\mathbb P^n$.

In \cite{Zhang2025} we proved the same numerical comparison at a fixed level $m$
under the finite-level hypothesis $\delta_m(-K_X)\geq1$
\cite[Theorem~1.3]{Zhang2025}.  Our argument here does not attempt to deduce this
hypothesis from $K$-semistability: it controls the ordinary vanishing-order
filtration at a general point, whereas $\delta_m$ is an infimum over all
valuations.

One should compare our work with Liu's solution to Yau's conjecture \eqref{eq:Yau}. Taking $d=m(n+1)$, the Euclidean model dimension becomes
\[
 \dim_{\mathbb C}\mathcal O_{m(n+1)}(\mathbb C^n)
 =\binom{n+m(n+1)}{n}
 =h^0(\mathbb P^n,-mK_{\mathbb P^n}),
\]
which is exactly the comparison term in our theorem. Thus both results are
sharp jet-counting statements with model-space rigidity. Liu's three-circle
theorem gives
$\operatorname{ord}_p(f)\leq\lfloor d\rfloor$ for every nonzero
$f\in\mathcal O_d(M)$, whereas our argument bounds the jet-dimension of $H^0(X,-mK_X)$ through slope semistability. The analogy lies in this common
local-to-global counting mechanism; the hypotheses themselves are of a
different nature, being a pointwise curvature condition on a complete
noncompact manifold in Liu's theorem and an algebraic slope-semistability
condition on a compact Fano manifold in ours.

The paper is organized as follows.  Section~\ref{sec:jet-filtration} proves a
general jet-filtration theorem under slope semistability of $T_X$ and derives
the main result in the smooth case.

\textbf{Acknowledgments.} K. Zhang is supported by Scientific Research
Innovation Capability Support Project for Young Faculty SRICSPYF-ZY2025169, and by NSFC grants 12571060, 12271038 and 12271040.
We are grateful to Hanfeng Chu for providing the proof of a missing equality case in the previous version of Theorem \ref{thm:tangent-semistable-intro}.

\subsection*{Declaration on the Use of AI}
The argument of this work has two main ingredients. The first one is a sharp jet counting estimate of holomorphic sections (Lemma \ref{lem:taylor-capacity}), whose equality case implies a lower bound of the Seshadri constant. The second one is a novel jet-filtration slope bound (Lemma \ref{lem:jet-filtration}), where the semistability of vector bundles enters crucially. The first ingredient already appeared in our earlier work \cite{Zhang2025} (which builds on an idea from K. fujita \cite{Fujita2018}); this point has also been applied in the recent resolution of Ehrhart's volume conjecture \cite{OpenAI2026,Liu2026Ehrhart}.
But the second ingredient here is due to ChatGPT 5.6, which probably borrows ideas from the early work of Ran \cite{Ran2002}, and the authors do not claim any credit on this point. ChatGPT was also used to assist with writing and exposition. The authors take full responsibility for the content of this note.

\section{A jet-filtration bound from slope semistability}
\label{sec:jet-filtration}

Throughout this section, all varieties are defined over $\mathbb C$.  If $A$ is
an ample line bundle on a smooth projective $n$-fold and $\mathcal E$ is a
torsion-free coherent sheaf of positive rank, we use the conventions
\[
  \deg_A(\mathcal E):=c_1(\mathcal E)\cdot A^{n-1},
  \qquad
  \mu_A(\mathcal E):=\frac{\deg_A(\mathcal E)}{\operatorname{rk}(\mathcal E)}.
\]
Here $c_1(\mathcal{E}):=c_1( \det\mathcal E)=c_1((\bigwedge^{\mathrm{rk}(\mathcal{E)}}\mathcal E)^{**}).$
We say that $\mathcal E$ is $\mu_A$-semistable if every coherent subsheaf
$0\neq\mathcal F\subsetneq\mathcal E$ with
$0<\operatorname{rk}\mathcal F<\operatorname{rk}\mathcal E$ satisfies
\[
  \mu_A(\mathcal F)\leq\mu_A(\mathcal E).
\]
Equivalently, it is enough to test saturated subsheaves.  We shall also use
the dual quotient criterion: $\mathcal E$ is $\mu_A$-semistable if and only if
every nonzero torsion-free quotient $\mathcal E\twoheadrightarrow\mathcal G$
satisfies $\mu_A(\mathcal G)\geq\mu_A(\mathcal E)$.  These standard equivalent
forms are recalled, for example, in \cite[Section~1.2]{HuybrechtsLehn2010}.

The following statement is slightly stronger than the quantized volume
comparison conjecture \eqref{eq:quantized-comparison-intro}, since it only uses slope semistability of the tangent
bundle.

\begin{theorem}[=Theorem \ref{thm:tangent-semistable-intro}]\label{thm:semistable-tangent-quantized-bound}
  Let $X$ be a smooth Fano variety of dimension $n\geq 1$, and put
  $H=-K_X$.  Suppose that $T_X$ is $\mu_H$-semistable.  Then, for every
  integer $m\geq 1$,
  \[
    h^0(X,mH)
    \leq
    \binom{n+m(n+1)}{n}
    =h^0\bigl(\mathbb P^n,-mK_{\mathbb P^n}\bigr).
  \]
  If equality holds for one integer $m\geq 1$, then $X\simeq\mathbb P^n$.
  If $(-K_X)^n=(-K_{\mathbb P^n})^n$ holds, then one also has $X\simeq\mathbb P^n$.
\end{theorem}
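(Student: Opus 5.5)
We follow the strategy outlined in the introduction. Fix $m\geq1$, set $L=mH$, $W=H^0(X,L)$, $N=\dim W$, and choose a general point $p\in X$. For $j\geq0$ let $W_j\subseteq W\otimes\mathcal O_X$ be the kernel of the jet-evaluation map $W\otimes\mathcal O_X\to\mathcal P^{j-1}_X(L)$, with $W_0=W\otimes\mathcal O_X$. Let $Q_j=W_j/W_{j+1}$. Through the exact sequence $0\to\operatorname{Sym}^j\Omega^1_X\otimes L\to\mathcal P^j_X(L)\to\mathcal P^{j-1}_X(L)\to0$, each $Q_j$ embeds as a torsion-free subsheaf of $\operatorname{Sym}^j\Omega^1_X\otimes L$. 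Its rank $r_j$ equals the generic rank, which is $\dim\bigl(H^0(L\otimes\mathfrak m_p^j)/H^0(L\otimes\mathfrak m_p^{j+1})\bigr)$ at a general $p$. In particular $r_j\leq d_j=\binom{n+j-1}{n-1}$, and $\sum_j r_j=N$; the filtration terminates because $W\otimes\mathcal O_X\to\mathcal P^J_X(L)$ is generically injective for $J\gg0$.

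\textbf{Slope step (main obstacle).} Since $T_X$ is $\mu_H$-semistable, so is $\Omega^1_X$. Over $\mathbb C$, symmetric powers of semistable sheaves are semistable (the Mehta--Ramanathan restriction theorem reduces this to curves, where it follows from Narasimhan--Seshadri). Hence $\mu_H(Q_j)\leq\mu_H(\operatorname{Sym}^j\Omega^1_X\otimes L)=H^n(m-j/n)$, so $\deg_H Q_j\leq r_jH^n(m-j/n)$. On the other hand, $c_1$ is additive in the filtration. Since $W\otimes\mathcal O_X$ is trivial and the last term vanishes, we get $\sum_j\deg_H Q_j=\deg_H(W\otimes\mathcal O_X)=0$. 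Hence $0\leq H^n\sum_j r_j(m-j/n)$, that is,
\[
\sum_j j\,r_j\leq mnN,
\]
which is \eqref{eq:finite-expected-vanishing-intro}, because $\sum_{a\geq1}h^0(L\otimes\mathfrak m_p^a)=\sum_j j r_j$. The delicate points are that saturation and torsion-freeness are needed so that $\det$ is well defined and additive (additivity of $c_1$ holds for coherent sheaves in codimension one), and that semistability of $\operatorname{Sym}^j$ is applied as a subsheaf inequality even when $r_j$ is smaller than the full rank.

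\textbf{Counting (Lemma \ref{lem:taylor-capacity}).} Let $q=m(n+1)$. For fixed $N$ with $0\leq r_j\leq d_j$ and $\sum r_j=N$, the weighted sum $\sum jr_j$ is minimized by filling the lowest orders first. If $N>\binom{n+q}{n}=\sum_{j\leq q}d_j$, this greedy filling uses every order $j\leq q$ completely and then places $N-\binom{n+q}{n}$ units at orders at least $q+1>mn$. Using the identity $\sum_{j\leq q}jd_j=mn\binom{n+q}{n}$ (a hockey-stick computation), we get $\sum jr_j>mnN$, a contradiction. Hence $N\leq\binom{n+q}{n}$.

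\textbf{Equality cases.} Suppose $N=\binom{n+q}{n}$. Any deviation from the greedy configuration strictly increases $\sum jr_j$ above $mnN$, so the configuration must be exactly greedy: $r_j=d_j$ for $j\leq q$ and $r_j=0$ beyond. Therefore $H^0(L)\to L_p\otimes\mathcal O_p/\mathfrak m_p^{q+1}$ is an isomorphism at a general $p$, and $mH$ separates $q$-jets there. This gives $\varepsilon(H,p)\geq q/m=n+1$, and \cite[Theorem~1.7]{BauerSzemberg2009} yields $X\simeq\mathbb P^n$.

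The volume bound follows from the inequality via asymptotic Riemann--Roch. For the volume equality case we need an asymptotic version of the argument. The normalized weights $j/m$ define a measure whose total mass is approximately $N$ and whose barycenter is at most $n$. If $(-K_X)^n=(n+1)^n$, the limit measure is forced to equal the distribution of the simplex of radius $n+1$. This would mean that $\varepsilon(H,p)$ equals $\lim_m s_m/m$, where $s_m$ is the maximal jet order separated by $mH$, and the limiting density argument from \cite{Zhang2025} should give $\varepsilon(H,p)\geq n+1$, and hence again $X\simeq\mathbb P^n$. This last step, turning an asymptotic extremal configuration into a Seshadri bound, is the part I am least sure of and would treat carefully, following Fujita's argument.
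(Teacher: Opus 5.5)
Your proof of the inequality and of the finite-level equality case is correct and follows the paper. The same principal-parts filtration and telescoping of $c_1$ give $\sum_j jr_j\le mnN$. Your greedy bound using $r_j\le d_j$ is Lemma~\ref{lem:taylor-capacity} rewritten in terms of graded ranks: both yield $\sum_j jr_j\ge mnN+(m+1)(N-D)$. Equality then forces the order-$q$ jet map at $p$ to be an isomorphism, whence $m(n+1)\le s(mH,p)\le m\,\varepsilon(H,p)$, and Bauer--Szemberg finishes.

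The gap is the last clause, $(-K_X)^n=(n+1)^n\Rightarrow X\simeq\mathbb P^n$, which your sketch does not establish. Two technical points are missing.
First, the bound $T_p(W_m)\le mnN_m$ holds only on an $m$-dependent open set $U_m$. An asymptotic argument therefore needs one point $p$ chosen very general in $\bigcap_m U_m$.
Second, passing the barycenter bound to the limit requires uniformly bounded normalized vanishing orders, i.e.\ $S_m(E)\to S(E)$ for the exceptional divisor $E$ of the blowup at $p$. The paper uses \cite[Lemma~2.9]{BlumJonsson2020} for this.

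The substantive problem is the final step. Even granting these points, the extremal limit measure only yields $\operatorname{vol}(\pi^*(-K_X)-xE)=(n+1)^n-x^n$ for $x\in[0,n+1]$. This says that the order-$s$ jet maps of $|mH|$ at $p$ with $s\le(n+1-\delta)m$ have cokernels of dimension $o(m^n)$, not that they are surjective. So you cannot conclude $s(mH,p)\ge(n+1-\delta)m$. The identity $\varepsilon(H,p)=\lim_m s(mH,p)/m$ is true in general but does not help, and no ``limiting density'' argument supplies the missing surjectivity.

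The missing ingredient is Fujita's characterization \cite[Theorem~2.3(2)]{Fujita2018}: that volume identity forces $\varepsilon(-K_X,p)=n+1$, after which Bauer--Szemberg applies. This is how the paper argues. From $S_m(E)\le n$ it gets $S(E)\le n$. Fujita's lower bound $\operatorname{vol}(\pi^*(-K_X)-xE)\ge(-K_X)^n-x^n$, the continuous analogue of your $r_j\le d_j$, gives $S(E)\ge\frac{n}{n+1}\bigl((-K_X)^n\bigr)^{1/n}=n$. Equality yields the volume identity, and Fujita's theorem concludes.
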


We first record the semistability facts used below.

\begin{lemma}\label{lem:symmetric-powers-semistable}
  Let $(X,A)$ be a smooth polarized projective variety over $\mathbb C$, and
  let $\mathcal E$ be a $\mu_A$-semistable vector bundle.  Then
  $\mathcal E^\vee$ is $\mu_A$-semistable, and
  $\operatorname{Sym}^j\mathcal E$ is $\mu_A$-semistable for every $j\geq 0$.
  Moreover,
  \[
    \mu_A\bigl(\operatorname{Sym}^j\mathcal E\bigr)
    =j\mu_A(\mathcal E).
  \]
\end{lemma}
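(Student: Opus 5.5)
The three assertions are of different depth, and I would handle them in increasing order of difficulty.

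\emph{Dual.} Since $\mathcal E$ is locally free, $\mathcal E^{\vee\vee}=\mathcal E$ and $\mu_A(\mathcal E^\vee)=-\mu_A(\mathcal E)$. Let $\mathcal F\subset\mathcal E^\vee$ be a saturated subsheaf, so that $\mathcal E^\vee/\mathcal F$ is torsion-free. Dualizing the inclusion gives a map $\mathcal E\to\mathcal F^\vee$ which is surjective away from codimension two. Its image $\mathcal G$ is a torsion-free quotient of $\mathcal E$ with the same rank as $\mathcal F$, and $c_1(\mathcal G)=c_1(\mathcal F^\vee)=-c_1(\mathcal F)$, because the two sheaves agree off codimension two and $c_1$ is computed via the reflexive hull of the determinant. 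The quotient criterion recalled above then gives $\mu_A(\mathcal G)\geq\mu_A(\mathcal E)$, which is the same as $\mu_A(\mathcal F)\leq\mu_A(\mathcal E^\vee)$.

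\emph{Slope formula.} This is a Chern class computation via the splitting principle. If $\mathcal E$ has Chern roots $x_1,\dots,x_r$, then $\operatorname{Sym}^j\mathcal E$ has roots $x_{i_1}+\dots+x_{i_j}$ indexed by multisets of size $j$. By symmetry each $x_i$ appears with total multiplicity $j\binom{r+j-1}{j}/r$. Hence
\[
c_1(\operatorname{Sym}^j\mathcal E)=\frac{j}{r}\binom{r+j-1}{j}\,c_1(\mathcal E),
\]
and dividing by $\operatorname{rk}\operatorname{Sym}^j\mathcal E=\binom{r+j-1}{j}$ gives $\mu_A(\operatorname{Sym}^j\mathcal E)=j\mu_A(\mathcal E)$.

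\emph{Semistability of symmetric powers.} This is the main obstacle, since slope semistability is not formally preserved by tensor operations in an elementary way. I would invoke the standard characteristic-zero theorem that tensor products of $\mu_A$-semistable sheaves are $\mu_A$-semistable. One proof goes through the Narasimhan--Seshadri/Donaldson--Uhlenbeck--Yau correspondence applied to the Jordan--H\"older graded object (polystable bundles carry Hermitian--Einstein metrics off codimension two, and tensor products of Hermitian--Einstein metrics are Hermitian--Einstein). Alternatively one can cite \cite[Theorem~3.1.4]{HuybrechtsLehn2010}, which works via Mehta--Ramanathan restriction to a general complete-intersection curve of high degree together with the curve case. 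This gives semistability of $\mathcal E^{\otimes j}$.

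To pass from $\mathcal E^{\otimes j}$ to $\operatorname{Sym}^j\mathcal E$: in characteristic zero, $\operatorname{Sym}^j\mathcal E$ is a direct summand of $\mathcal E^{\otimes j}$, being the image of the symmetrization idempotent. Any destabilizing subsheaf $\mathcal F\subset\operatorname{Sym}^j\mathcal E$ would then be a subsheaf of $\mathcal E^{\otimes j}$ with slope greater than $j\mu_A(\mathcal E)=\mu_A(\mathcal E^{\otimes j})=\mu_A(\operatorname{Sym}^j\mathcal E)$, contradicting the semistability of the tensor power. The case $j=0$ is trivial, since $\operatorname{Sym}^0\mathcal E=\mathcal O_X$.
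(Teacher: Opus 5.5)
Your proposal is correct and follows the paper's proof essentially step by step. The paper handles the dual by applying the quotient criterion to the image of $\mathcal E\to\mathcal F^\vee$. It gets semistability of $\mathcal E^{\otimes j}$ from Mehta--Ramanathan restriction to a general complete-intersection curve together with the curve tensor-product theorem, which it spells out inline where you cite it. It then obtains $\operatorname{Sym}^j\mathcal E$ as a same-slope direct summand via the symmetrization idempotent, and your explicit Chern-root count just makes its one-line splitting-principle remark for the slope formula concrete.
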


\begin{proof}
  The assertion for duals follows from the quotient criterion.  If
  $\mathcal G\subset\mathcal E^\vee$ is a nonzero saturated subsheaf, then
  dualizing $\mathcal G\hookrightarrow\mathcal E^\vee$ gives a generically
  surjective map $\mathcal E\to\mathcal G^\vee$.  Its torsion-free image
  $\mathcal I$ is a quotient of $\mathcal E$ and agrees with $\mathcal G^\vee$
  in codimension one.  Hence
  $\mu_A(\mathcal I)=\mu_A(\mathcal G^\vee)=-\mu_A(\mathcal G)$.  Since
  $\mathcal E$ is semistable, the quotient criterion gives
  $-\mu_A(\mathcal G)\geq\mu_A(\mathcal E)$, or
  $\mu_A(\mathcal G)\leq\mu_A(\mathcal E^\vee)$.

  We spell out the standard characteristic-zero argument.  Suppose that
  $\mathcal E^{\otimes j}$ is not $\mu_A$-semistable, and let
  $\mathcal F\subset\mathcal E^{\otimes j}$ be a saturated destabilizing
  subsheaf.  If $\dim X=1$, put $C=X$; otherwise, for $k\gg0$, choose a
  general smooth complete-intersection curve
  \[
    C=H_1\cap\cdots\cap H_{n-1},
    \qquad H_i\in |kA|,
  \]
  which avoids the codimension-two loci where $\mathcal F$ or its quotient is
  not locally free.  The Mehta--Ramanathan restriction theorem implies that
  $\mathcal E|_C$ is semistable.  Hence $(\mathcal E|_C)^{\otimes j}$ is
  semistable, since tensor products of semistable bundles on a smooth complex
  curve are semistable.  On the other hand,
  $\mathcal F|_C\subset(\mathcal E|_C)^{\otimes j}$ remains destabilizing,
  because all slopes are multiplied by $k^{n-1}$.  This contradiction proves
  that $\mathcal E^{\otimes j}$ is semistable.  The symmetrizing idempotent
  \[
    \frac{1}{j!}\sum_{\sigma\in\mathfrak S_j}\sigma
  \]
  exhibits $\operatorname{Sym}^j\mathcal E$ as a direct summand of
  $\mathcal E^{\otimes j}$.  Its slope is $j\mu_A(\mathcal E)$ by the splitting
  principle, which is also the slope of $\mathcal E^{\otimes j}$.  A
  direct summand of the same slope in a semistable bundle is semistable.  See
  \cite{MehtaRamanathan1982,RamananRamanathan1984,Totaro1994} for the
  restriction, tensor-product, and tensor-construction results used here.
\end{proof}

For a line bundle $L$ and a finite-dimensional vector subspace
$W\subseteq H^0(X,L)$, set
\[
  F_p^a(W):=W\cap H^0(X,L\otimes\mathfrak m_p^a),
  \qquad
  T_p(W):=\sum_{a\geq 1}\dim F_p^a(W).
\]
The sum defining $T_p(W)$ is finite: a nonzero section has finite vanishing
order at $p$, and $W$ is finite-dimensional.

\begin{lemma}\label{lem:jet-filtration}
  Let $(X,A)$ be a smooth polarized irreducible projective $n$-fold.  Assume
  that $\Omega_X^1$ is $\mu_A$-semistable and
  $\mu_A(\Omega_X^1)<0$.  Let $L$ be a line bundle and let
  $W\subseteq H^0(X,L)$ be a vector subspace of dimension $N>0$.  Then there
  is a nonempty Zariski-open subset $U=U(L,W)\subseteq X$ such that, for every
  $p\in U$,
  \begin{equation}\label{eq:general-jet-bound}
    T_p(W)
    \leq
    \frac{\deg_A(L)}{-\mu_A(\Omega_X^1)}\,N.
  \end{equation}
\end{lemma}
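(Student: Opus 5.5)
We plan to filter the trivial bundle $W\otimes\mathcal O_X$ by jet order and compare first Chern classes. For $j\geq0$ let $\mathcal K_j\subseteq W\otimes\mathcal O_X$ be the kernel of the jet evaluation $W\otimes\mathcal O_X\to\mathcal P^{j-1}_X(L)$, with $\mathcal K_0=W\otimes\mathcal O_X$. These are saturated subsheaves: each $\mathcal K_j$ is a kernel of a map to a locally free sheaf, so the quotient embeds in a torsion-free sheaf. They form a decreasing filtration with $\mathcal K_j=0$ for $j\gg0$, because a nonzero section cannot vanish to infinite order. Since $\mathcal P^{j}_X(L)$ is an extension of $\mathcal P^{j-1}_X(L)$ by $\operatorname{Sym}^j\Omega^1_X\otimes L$, the map $\mathcal K_j\to\mathcal P^j_X(L)$ lands in $\operatorname{Sym}^j\Omega^1_X\otimes L$. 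Hence $Q_j:=\mathcal K_j/\mathcal K_{j+1}$ is a torsion-free subsheaf of $\operatorname{Sym}^j\Omega_X^1\otimes L$.

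Next we pass to a general point. Let $U$ be the open set on which every $\mathcal K_j$ and every $Q_j$ is locally free of constant rank; this is a finite intersection of nonempty opens. For $p\in U$, the fiber of $\mathcal K_j$ at $p$ injects into $W$, and we claim its image is exactly $F^j_p(W)$. The point is that on $U$ the filtration commutes with taking fibers, so $\dim F^j_p(W)=\operatorname{rk}\mathcal K_j$. Then
\[
T_p(W)=\sum_{j\geq1}\operatorname{rk}\mathcal K_j=\sum_{j\geq0} j\operatorname{rk}Q_j .
\]
This identification is the step we expect to be the main obstacle. The containment of fibers in $F^j_p(W)$ is clear. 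For the reverse, note that the evaluation $W\to \mathcal P^{j-1}_X(L)_p$ has kernel of dimension at least $N-\operatorname{rk}(\text{image sheaf})$, with equality at points where the image sheaf is a subbundle. We therefore shrink $U$ further so that each image $W\otimes\mathcal O\to\mathcal P^{j-1}(L)$ has constant rank, i.e.\ its image is a subbundle near $p$. Then the kernel of the fiber map equals the fiber of the kernel.

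The degree estimate comes from two ingredients. First, $c_1(W\otimes\mathcal O_X)=0$ and $c_1$ is additive, so $\sum_j\deg_A Q_j=0$. Second, by Lemma~\ref{lem:symmetric-powers-semistable}, $\operatorname{Sym}^j\Omega^1_X\otimes L$ is $\mu_A$-semistable of slope $j\mu_A(\Omega_X^1)+\deg_A L$, so
\[
\deg_A Q_j\leq \operatorname{rk}Q_j\bigl(\deg_A L+j\mu_A(\Omega^1_X)\bigr).
\]
When $Q_j$ has full rank this is an equality of slopes; for proper subsheaves it is semistability. Summing over $j$ gives
\[
0\leq N\deg_A L+\mu_A(\Omega_X^1)\sum_j j\operatorname{rk}Q_j .
\]
Since $\mu_A(\Omega^1_X)<0$, this rearranges to $T_p(W)\leq \deg_A(L)N/(-\mu_A(\Omega^1_X))$, which is the desired bound.
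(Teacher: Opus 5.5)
Your proposal follows the paper's proof step for step. It uses the same kernel filtration $\mathcal K_j=\ker\bigl(W\otimes\mathcal O_X\to\mathcal P^{j-1}(L)\bigr)$ and the same shrinking to the locus where every evaluation map has constant rank, so that $\mathcal K_j\otimes k(p)=F_p^j(W)$. It then uses the same telescoping $\sum_j c_1(Q_j)=0$ and sums the same slope bounds.

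One justification is wrong and needs correcting. When $Q_j\subseteq\mathcal E_j:=\operatorname{Sym}^j\Omega_X^1\otimes L$ has full rank, the slopes are \emph{not} equal in general. This case always occurs, at $j=0$: $Q_0$ is $L$ multiplied by the base ideal of $W$. If $W$ has a fixed divisorial component $F$, then $\deg_A Q_0=\deg_A L-F\cdot A^{n-1}<\deg_A L$. What is true, and what the paper uses, is weaker but sufficient. The full-rank inclusion induces a nonzero map $\det Q_j\to\det\mathcal E_j$, so $\det Q_j\simeq\det\mathcal E_j(-D_j)$ with $D_j$ effective, and hence $\deg_A Q_j\leq\deg_A\mathcal E_j$. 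This is exactly the inequality you need, so your conclusion stands once the reason is fixed.

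Your termination step ("a nonzero section cannot vanish to infinite order") is too compressed as written. A local section of $\mathcal K_j$ is an $\mathcal O_X$-linear combination $\sum f_i\otimes s_i$ killed by the $\mathcal O_X$-linear extension of the jet map. That is not the same as $\sum f_i s_i$ vanishing to order $j$, because the jet map is a differential operator. Argue through a fibre instead, as the paper does:
\begin{itemize}
\item at a point $x$, finite-dimensionality of $W$ and Krull give $F_x^J(W)=0$ for some $J$;
\item so $e_{J-1}(x)$ is injective and $e_{J-1}$ has generic rank $N$;
\item so $\mathcal K_J$ is a rank-zero subsheaf of the torsion-free sheaf $W\otimes\mathcal O_X$, hence zero.
\end{itemize}
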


\begin{proof}
  Let $\mathcal P^j(L)$ denote the bundle of principal parts of order $j$.
  Since $X$ is smooth, these are vector bundles and fit into exact sequences
  \begin{equation}\label{eq:principal-parts}
    0\longrightarrow
    \operatorname{Sym}^j\Omega_X^1\otimes L
    \longrightarrow \mathcal P^j(L)
    \longrightarrow \mathcal P^{j-1}(L)
    \longrightarrow 0
    \qquad (j\geq 1).
  \end{equation}
  Although the universal jet map $L\to\mathcal P^j(L)$ is a differential
  operator, every $s\in W$ determines a global section $j^j(s)$ of
  $\mathcal P^j(L)$.  Extending the map $s\mapsto j^j(s)$ by
  $\mathcal O_X$-linearity defines the evaluation morphism (between sheaves)
  \[
    e_j:W\otimes_{\mathbb C}\mathcal O_X\longrightarrow\mathcal P^j(L).
  \]
  Define
  \[
    \mathcal K_0:=W\otimes_{\mathbb C}\mathcal O_X,
    \qquad
    \mathcal K_j:=\ker(e_{j-1})\quad (j\geq 1).
  \]
  We emphasize the zeroth graded piece.  Put
  \[
    \mathcal Q_0:=\operatorname{im}(e_0)\subseteq L
      =\operatorname{Sym}^0\Omega_X^1\otimes L.
  \]
  For $j\geq 1$, the compatibility of $e_j$ with the truncation map in
  \eqref{eq:principal-parts} shows that $e_j|_{\mathcal K_j}$ factors through
  $\operatorname{Sym}^j\Omega_X^1\otimes L$; define $\mathcal Q_j$ to be its
  image.  Its kernel is precisely $\mathcal K_{j+1}$.  Thus, for every
  $j\geq 0$, there is an exact sequence of coherent sheaves
  \begin{equation}\label{eq:KQ-filtration}
    0\longrightarrow\mathcal K_{j+1}
    \longrightarrow\mathcal K_j
    \longrightarrow\mathcal Q_j
    \longrightarrow0,
    \qquad
    \mathcal Q_j\subseteq
    \operatorname{Sym}^j\Omega_X^1\otimes L.
  \end{equation}
  Here every $\mathcal K_j$ is torsion-free because it is a subsheaf of the
  locally free sheaf $\mathcal K_0=W\otimes_{\mathbb C}\mathcal O_X$ on the
  integral smooth variety $X$.  Every $\mathcal Q_j$ is torsion-free because it
  is, by construction, the image of $\mathcal K_j$ inside the vector bundle
  $\operatorname{Sym}^j\Omega_X^1\otimes L$, hence a subsheaf of a torsion-free
  sheaf.

  This filtration terminates.  Fix a closed point $x\in X$.  For every
  $j\geq1$, the canonical fiberwise
  identification
  \[
    \mathcal P^{j-1}(L)\otimes k(x)
    \simeq L_x\otimes_{\mathcal O_{X,x}}
             \mathcal O_{X,x}/\mathfrak m_x^j
  \]
  gives
  \[
    \ker\bigl(e_{j-1}(x)\bigr)=F_x^j(W).
  \]
  Moreover, $\bigcap_{j\geq0}F_x^j(W)=0$: by the Krull intersection theorem,
  a section in this intersection has zero germ at $x$, and hence vanishes on a
  nonempty open subset and therefore on the integral variety $X$.  Since $W$
  is finite-dimensional, $F_x^J(W)=0$ for some $J$.  Thus $e_{J-1}(x)$ is
  injective, so $e_{J-1}$ has generic rank $N$.  Consequently $\mathcal K_J$
  has rank zero.  As it is a torsion-free subsheaf of
  $W\otimes\mathcal O_X$, one has $\mathcal K_J=0$.  Additivity of the first
  Chern class in
  \eqref{eq:KQ-filtration} gives
  \begin{equation}\label{eq:c1-telescoping}
    \sum_{j=0}^{J-1}c_1(\mathcal Q_j)
    =c_1(\mathcal K_0)-c_1(\mathcal K_J)=0.
  \end{equation}

  Shrink to a nonempty open subset $U\subseteq X$ on which each of
  $e_0,\ldots,e_{J-1}$ has constant rank equal to its generic rank.  Constant-rank normal form
  then shows that their kernels, images, and cokernels are locally free and commute
  with taking fibres.  Hence, for $p\in U$,
  \[
    \begin{aligned}
      \mathcal K_0\otimes k(p)&=W=F_p^0(W),\\
      \mathcal K_j\otimes k(p)
      &=\ker(e_{j-1}(p))=F_p^j(W)
        \quad (1\leq j\leq J).
    \end{aligned}
  \]
  For $0\leq j\leq J-1$, consider the symbol map
  \[
    \phi_j:\mathcal K_j|_U\longrightarrow
    (\operatorname{Sym}^j\Omega_X^1\otimes L)|_U
  \]
  Then, fibrewise,
  \[
    \ker\phi_j(p)=\ker e_j(p)
      =\mathcal K_{j+1}\otimes k(p).
  \]
  Thus $\phi_j$ has constant rank
  $\operatorname{rk}(\mathcal K_j)-\operatorname{rk}(\mathcal K_{j+1})$,
  and \eqref{eq:KQ-filtration} remains exact after taking every fibre over
  $U$.
  Consequently, for $p\in U$, if
  \[
    b_j:=\operatorname{rk}(\mathcal Q_j),
  \]
  then
  \begin{equation}\label{eq:vanishing-sequence-identities}
    b_j=\dim F_p^j(W)-\dim F_p^{j+1}(W),
    \qquad
    \sum_{j=0}^{J-1}b_j=N,
    \qquad
    \sum_{j=0}^{J-1}j b_j=T_p(W).
  \end{equation}

  By Lemma~\ref{lem:symmetric-powers-semistable}, the bundle
  \[
    \mathcal E_j:=\operatorname{Sym}^j\Omega_X^1\otimes L
  \]
  is $\mu_A$-semistable and
  \[
    \mu_A(\mathcal E_j)=\deg_A(L)+j\mu_A(\Omega_X^1).
  \]
  If $0<b_j<\operatorname{rk}(\mathcal E_j)$, semistability and
  $\mathcal Q_j\subseteq\mathcal E_j$ imply
  \[
    \deg_A(\mathcal Q_j)
    \leq b_j\mu_A(\mathcal E_j).
  \]
  The same inequality holds when $b_j=\operatorname{rk}(\mathcal E_j)$.
  Set $\det\mathcal Q_j=(\bigwedge^{b_j}\mathcal Q_j)^{**}$.  The full-rank
  inclusion induces a nonzero morphism of line bundles
  $\det\mathcal Q_j\to\det\mathcal E_j$, and hence
  \[
    \det(\mathcal Q_j)\simeq\det(\mathcal E_j)(-D_j)
  \]
  for an effective divisor $D_j$, and $D_j\cdot A^{n-1}\geq0$.  If $b_j=0$,
  then $\mathcal Q_j=0$ and there is nothing to prove.  Hence in all cases
  \begin{equation}\label{eq:Qj-slope-bound}
    \deg_A(\mathcal Q_j)
    \leq
    b_j\bigl(\deg_A(L)+j\mu_A(\Omega_X^1)\bigr).
  \end{equation}
  Summing \eqref{eq:Qj-slope-bound} and using
  \eqref{eq:c1-telescoping} and
  \eqref{eq:vanishing-sequence-identities}, we obtain
  \[
    0
    \leq
    N\deg_A(L)+\mu_A(\Omega_X^1)T_p(W).
  \]
  Since $\mu_A(\Omega_X^1)<0$, this is exactly
  \eqref{eq:general-jet-bound}.
\end{proof}

We recall the following elementary jet-dimension estimate, which can be extracted from our previous work (see \cite[\S 3]{Zhang2025}).

\begin{lemma}\label{lem:taylor-capacity}
  Let $X$ be a smooth $n$-fold, let $L$ be a line bundle, and let
  $W\subseteq H^0(X,L)$ have dimension $N$.  Fix a point $p\in X$ and an
  integer $q\geq0$, and put
  \[
    D_q:=\binom{n+q}{n}.
  \]
  If $N\geq D_q$, then
  \begin{equation}\label{eq:capacity-strict}
    T_p(W)
    \geq
    (q+1)N-\binom{n+q+1}{n+1}.
  \end{equation}
  If $N=D_q$ and
  \begin{equation}\label{eq:capacity-equality-hypothesis}
    T_p(W)
    \leq
    (q+1)D_q-\binom{n+q+1}{n+1},
  \end{equation}
  then the order-$q$ jet-evaluation map
  \[
    W\longrightarrow
    L_p\otimes_{\mathcal O_{X,p}}
    \mathcal O_{X,p}/\mathfrak m_p^{q+1}
  \]
  is an isomorphism.
\end{lemma}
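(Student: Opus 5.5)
The plan is to express $T_p(W)$ through the jumps of the vanishing-order filtration at $p$, and to bound each jump by the dimension of the corresponding homogeneous Taylor space. For $j\geq0$ put $f_j:=\dim F_p^j(W)$ and $b_j:=f_j-f_{j+1}$. Here $b_j$ is the dimension of the image of $F_p^j(W)$ in $L_p\otimes\mathfrak m_p^j/\mathfrak m_p^{j+1}$, since the kernel of $F_p^j(W)\to L_p\otimes\mathfrak m_p^j/\mathfrak m_p^{j+1}$ is $F_p^{j+1}(W)$. Hence
\[
0\leq b_j\leq d_j:=\binom{n+j-1}{n-1}.
\]
As noted after the definition of $T_p(W)$, $f_j=0$ for $j\gg0$. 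Telescoping therefore gives
\[
\sum_j b_j=N,\qquad T_p(W)=\sum_{a\geq1}f_a=\sum_j j\,b_j .
\]
This uses only the pointwise filtration, with no genericity of $p$. The problem becomes a greedy minimization: minimize $\sum_j jb_j$ subject to $0\leq b_j\leq d_j$ and $\sum_j b_j=N$.

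For \eqref{eq:capacity-strict}, rewrite
\[
\sum_j j b_j=\sum_j (j-(q+1))b_j+(q+1)N .
\]
Split the sum at $q$. For $j\leq q$ the coefficient $j-q-1$ is negative, so
\[
(j-q-1)b_j\geq (j-q-1)d_j .
\]
For $j>q$ the coefficient is nonnegative, so $(j-q-1)b_j\geq0$. Hence
\[
T_p(W)\geq (q+1)N-\sum_{j=0}^q (q+1-j)d_j .
\]
It remains to check the identity
\[
\sum_{j=0}^q (q+1-j)\binom{n+j-1}{n-1}=\binom{n+q+1}{n+1}.
\]
This follows from the hockey-stick identity applied twice: $\sum_{j\leq k}d_j=\binom{n+k}{n}$ and $\sum_{k=0}^q\binom{n+k}{n}=\binom{n+q+1}{n+1}$, with the left side equal to $\sum_{k=0}^q\sum_{j\leq k}d_j$. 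Note that the hypothesis $N\geq D_q$ is not actually needed for the inequality. It only makes the bound the natural one.

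For the equality statement, take $N=D_q$ and assume \eqref{eq:capacity-equality-hypothesis}. Then every inequality above must be an equality. So $b_j=d_j$ for all $j\leq q$, and $(j-q-1)b_j=0$ for $j>q$. The second condition says $b_j=0$ for $j\geq q+2$, but it places no constraint on $b_{q+1}$, because that coefficient is zero. To handle this, I will use the counting: $\sum_{j\leq q}b_j=\sum_{j\leq q}d_j=D_q=N$, so $b_j=0$ for all $j>q$. Hence $f_{q+1}=0$, which means the order-$q$ jet map
\[
W\to L_p\otimes\mathcal O_{X,p}/\mathfrak m_p^{q+1}
\]
is injective. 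Both sides have dimension $D_q$, so the map is an isomorphism.

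I expect the main subtlety to be this bookkeeping of the equality case, specifically the degenerate coefficient at $j=q+1$, which is resolved by the dimension count rather than by the inequality itself. Everything else is routine.
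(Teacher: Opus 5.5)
Your proof is correct and is essentially the paper's Taylor-coefficient count. The paper sums the cumulative codimension bounds $\dim F_p^a(W)\geq N-\binom{n+a-1}{n}$ over $1\leq a\leq q+1$, which is your jump bound $b_j\leq d_j$ after summation by parts; in that cumulative form the equality case yields $\dim F_p^{q+1}(W)=N-D_q=0$ directly from the $a=q+1$ term, so your extra counting step for the degenerate coefficient at $j=q+1$ (though valid) is not needed.
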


\begin{proof}
  Taylor expansion at the smooth point $p$ gives, for every $a\geq1$,
  \begin{equation}\label{eq:taylor-codimension}
    \dim F_p^a(W)
    \geq
    N-\operatorname{length}(\mathcal O_{X,p}/\mathfrak m_p^a)
    =N-\binom{n+a-1}{n}.
  \end{equation}
  If $N\geq D_q$, the right-hand side is nonnegative for
  $1\leq a\leq q+1$.  Summing \eqref{eq:taylor-codimension} in this range and
  using the hockey-stick identity gives
  \begin{align*}
    T_p(W)
    &\geq \sum_{a=1}^{q+1}\dim F_p^a(W)\\
    &\geq (q+1)N-\sum_{a=1}^{q+1}\binom{n+a-1}{n}\\
    &=(q+1)N-\binom{n+q+1}{n+1}.
  \end{align*}
  This proves \eqref{eq:capacity-strict}.

  Now suppose $N=D_q$.  The same calculation gives the opposite inequality to
  \eqref{eq:capacity-equality-hypothesis}; hence equality holds at every step.
  In particular,
  \[
    \dim F_p^{q+1}(W)=N-D_q=0.
  \]
  The order-$q$ evaluation map is therefore injective.  Its target has
  dimension
  $\operatorname{length}(\mathcal O_{X,p}/\mathfrak m_p^{q+1})=D_q=N$, so it
  is an isomorphism.
\end{proof}

\begin{theorem}
\label{thm:general-line-bundle-bound}
Let $X$ be a smooth irreducible projective variety of dimension $n$, let $A$
be an ample line bundle, and assume that $T_X$ is $\mu_A$-semistable.  Put
\[
  \tau_A(X):=(-K_X)\cdot A^{n-1}.
\]
Assume $\tau_A(X)>0$.  Let $M$ be a line bundle with
$N:=h^0(X,M)>0$, and set
\[
  \lambda_A(M):=\frac{M\cdot A^{n-1}}{\tau_A(X)}.
\]
Then $\lambda_A(M)\geq0$ and
\begin{equation}\label{eq:general-line-bundle-bound}
  h^0(X,M)
  \leq
  \binom{n+\left\lceil (n+1)\lambda_A(M)\right\rceil}{n}.
\end{equation}
Consequently, for any line bundle $L$ and any integer $m\geq1$,
\begin{equation}\label{eq:general-mL-bound}
  h^0(X,mL)
  \leq
  \binom{n+
  \left\lceil
  (n+1)m\,\frac{L\cdot A^{n-1}}{(-K_X)\cdot A^{n-1}}
  \right\rceil}{n}
\end{equation}
whenever $h^0(X,mL)>0.$
\end{theorem}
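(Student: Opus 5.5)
We combine Lemma~\ref{lem:jet-filtration} with Lemma~\ref{lem:taylor-capacity}, taking $W=H^0(X,M)$ and $L=M$. First, $T_X$ is $\mu_A$-semistable, so by Lemma~\ref{lem:symmetric-powers-semistable} its dual $\Omega_X^1$ is $\mu_A$-semistable. Moreover
\[
\mu_A(\Omega_X^1)=\frac{K_X\cdot A^{n-1}}{n}=-\frac{\tau_A(X)}{n}<0 .
\]
Hence Lemma~\ref{lem:jet-filtration} applies. It gives a point $p$ (in fact a general one) with
\[
T_p(W)\le \frac{n\,(M\cdot A^{n-1})}{\tau_A(X)}\,N = n\lambda_A(M)\,N .
\]
Since $T_p(W)\ge 0$ and $N>0$, this already gives $\lambda_A(M)\ge0$. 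Alternatively, $\lambda_A(M)\ge0$ follows because $M$ is effective, so $M\cdot A^{n-1}\ge0$.

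Next, put $q:=\lceil (n+1)\lambda_A(M)\rceil$ and suppose, for contradiction, that $N>D_q=\binom{n+q}{n}$. Then $N\ge D_{q'}$ for $q'=q+1$ as well. Lemma~\ref{lem:taylor-capacity} applied with $q'$ gives
\[
T_p(W)\ge (q+2)N-\binom{n+q+2}{n+1}.
\]
Combining the two bounds yields
\[
\binom{n+q+2}{n+1}\ge \bigl(q+2-n\lambda\bigr)N,
\qquad \lambda:=\lambda_A(M).
\]
We want this to contradict $N\ge D_q+1$. To get that contradiction, we substitute $N\ge D_{q}+1$. Since $\lambda\le q/(n+1)$, we have $q+2-n\lambda\ge q+2-nq/(n+1)=(q+2(n+1))/(n+1)>0$. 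Using the identity $\binom{n+q+2}{n+1}=\frac{n+q+2}{n+1}D_{q+1}$ and $D_{q+1}=D_q\frac{n+q+1}{q+1}$, the contradiction reduces to an elementary inequality in $q,n$.

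The cleanest form of the counting is the one sketched in the introduction. Minimize $\sum j b_j$ subject to $b_j\le d_j$ and $\sum b_j=N$. The minimum is attained by filling $d_0,d_1,\dots$ greedily. The constraint $b_j\le d_j$ follows from $b_j=\dim F^j_p-\dim F^{j+1}_p\le d_j$. If $N>D_q$, the greedy minimum is at least $\sum_{j\le q} j d_j+(q+1)(N-D_q)$.

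Recall that $\sum_{j\le q}jd_j=\frac{nq}{n+1}D_q$. This is the identity $\sum_{j\le q} j d_j = n\binom{n+q}{n+1}$, and $n\binom{n+q}{n+1}=\frac{nq}{n+1}D_q$. Therefore
\[
T_p\ge \frac{nq}{n+1}D_q+(q+1)(N-D_q)>\frac{nq}{n+1}N\ge n\lambda N .
\]
The middle inequality holds because $q+1>\frac{nq}{n+1}$ and $N-D_q>0$. This contradicts the bound from Lemma~\ref{lem:jet-filtration}. I would present the argument this way, via the $b_j$ from \eqref{eq:vanishing-sequence-identities}, rather than via Lemma~\ref{lem:taylor-capacity}, since it avoids the binomial juggling.

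The only delicate points are checking $b_j\le d_j$ and the identity $\sum_{j\le q} jd_j=\frac{nq}{n+1}D_q$. For $b_j\le d_j$: the $j$-th graded piece of the filtration injects into $\mathfrak m_p^j/\mathfrak m_p^{j+1}\otimes L_p$, whose dimension is $d_j$. The identity follows from the hockey-stick identity applied to $j\binom{n-1+j}{n-1}=n\binom{n+j-1}{n}$.

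The consequence \eqref{eq:general-mL-bound} follows by setting $M=mL$ and using $\lambda_A(mL)=m\frac{L\cdot A^{n-1}}{\tau_A(X)}$. The main obstacle is simply organizing this greedy-minimization counting cleanly; the geometric input is entirely supplied by Lemma~\ref{lem:jet-filtration}.
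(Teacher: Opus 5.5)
Your final argument (the greedy count) is correct and is essentially the paper's proof. The greedy minimum $\sum_{j\le q} j d_j+(q+1)(N-D_q)$ equals $(q+1)N-\binom{n+q+1}{n+1}$, which is exactly the bound Lemma~\ref{lem:taylor-capacity} gives at level $q$. Your chain $>\frac{nq}{n+1}N\ge n\lambda N$ is just a rearrangement of the paper's identity $(q+1-n\lambda)(N-D_q)+\frac{n(q-(n+1)\lambda)}{n+1}D_q>0$.

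Drop the abandoned detour, though. Its claim that $N>D_q$ implies $N\ge D_{q+1}$ is false for $n\ge2$: take $n=2$, $q=1$, $N=4$, where $D_1=3$ and $D_2=6$. No shift to $q+1$ is needed, since $N>D_q$ already satisfies the hypothesis of Lemma~\ref{lem:taylor-capacity} at level $q$.
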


\begin{proof}
If $N>0$, a nonzero section of $M$ defines an effective divisor linearly
equivalent to $M$.  Since $A$ is ample, $M\cdot A^{n-1}\geq0$, so
$\lambda_A(M)\geq0$.

By Lemma~\ref{lem:symmetric-powers-semistable}, the dual of a semistable
vector bundle is semistable.  Hence $\Omega_X^1$ is $\mu_A$-semistable.  Also
\[
  \mu_A(\Omega_X^1)
  =
  \frac{K_X\cdot A^{n-1}}{n}
  =
  -\frac{\tau_A(X)}{n}.
\]
Apply Lemma~\ref{lem:jet-filtration} to
$W=H^0(X,M)$.  For a general $p\in X$,
\begin{equation}\label{eq:general-slope-jet-upper}
  T_p(W)\leq n\lambda_A(M)N.
\end{equation}
Put
\[
  \lambda:=\lambda_A(M),\qquad
  q:=\left\lceil (n+1)\lambda\right\rceil,\qquad
  D_q:=\binom{n+q}{n}.
\]
Suppose, for contradiction, that $N>D_q$.  Lemma~\ref{lem:taylor-capacity}
gives
\[
  T_p(W)\geq
  (q+1)N-\binom{n+q+1}{n+1}.
\]
Using
\[
  \binom{n+q+1}{n+1}
  =
  \frac{n+q+1}{n+1}D_q
\]
and $q\geq(n+1)\lambda$, we obtain
\begin{align*}
 &(q+1)N-\binom{n+q+1}{n+1}-n\lambda N \\
 &\quad=
 (q+1-n\lambda)(N-D_q)
 +\frac{n(q-(n+1)\lambda)}{n+1}D_q
 >0.
\end{align*}
This contradicts \eqref{eq:general-slope-jet-upper}.  Therefore
$N\leq D_q$, proving \eqref{eq:general-line-bundle-bound}.  Applying this to
$M=mL$ gives \eqref{eq:general-mL-bound}.
\end{proof}

\begin{corollary}\label{cor:general-line-bundle-fano-comparison}
In the setting of Theorem~\ref{thm:general-line-bundle-bound}, if a line
bundle $L$ satisfies
\[
  L\cdot A^{n-1}\leq (-K_X)\cdot A^{n-1},
\]
then, for every $m\geq1$,
\[
  h^0(X,mL)\leq\binom{n+m(n+1)}{n}.
\]
If $L$ is pseudoeffective, then
\[
  \operatorname{vol}(L)
  \leq
  (n+1)^n
  \left(
  \frac{L\cdot A^{n-1}}{(-K_X)\cdot A^{n-1}}
  \right)^n.
\]
\end{corollary}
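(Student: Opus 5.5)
The plan is to read both statements directly off Theorem~\ref{thm:general-line-bundle-bound}, applied to the bundles $mL$. The first part is immediate. Put $\lambda:=\frac{L\cdot A^{n-1}}{(-K_X)\cdot A^{n-1}}$, so the hypothesis says $\lambda\leq 1$. Fix $m\geq1$. If $h^0(X,mL)=0$ there is nothing to prove. Otherwise \eqref{eq:general-mL-bound} gives
\[
h^0(X,mL)\leq\binom{n+\lceil (n+1)m\lambda\rceil}{n}.
\]
Since $\lceil (n+1)m\lambda\rceil\leq m(n+1)$ and $\binom{n+q}{n}$ is nondecreasing in $q$, this yields $h^0(X,mL)\leq \binom{n+m(n+1)}{n}$.

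For the volume estimate I will not rely on the first part, since $\lambda$ need not be at most $1$. Instead I use the general bound \eqref{eq:general-mL-bound} for every $m$ with $h^0(X,mL)>0$.

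\emph{Case $L$ not big.} Then $\operatorname{vol}(L)=0$. The right-hand side is nonnegative, because $L$ is pseudoeffective and $A$ is ample, so $L\cdot A^{n-1}\geq0$ and hence $\lambda\geq 0$. The inequality is therefore trivial. (Here I use that the degree of a pseudoeffective class against a movable curve class, such as $A^{n-1}$, is nonnegative.)

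\emph{Case $L$ big.} Then $h^0(X,mL)>0$ for all $m$ sufficiently divisible. In fact $h^0(X,mL)>0$ for all $m\gg0$, but taking the limit along a divisible sequence suffices, since $\operatorname{vol}(L)=\lim_m n!\,h^0(X,mL)/m^n$ may be computed along any such subsequence. For these $m$,
\[
h^0(X,mL)\leq\binom{n+\lceil (n+1)m\lambda\rceil}{n}\leq \frac{\bigl((n+1)m\lambda+n+1\bigr)^n}{n!},
\]
using $\binom{n+q}{n}\leq (q+n)^n/n!$ and $\lceil x\rceil\leq x+1$. Multiplying by $n!/m^n$ and letting $m\to\infty$ gives $\operatorname{vol}(L)\leq (n+1)^n\lambda^n$, which is the claim.

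The only step needing care is the reduction for non-big pseudoeffective $L$, namely the nonnegativity of $L\cdot A^{n-1}$. This follows because $L$ is a limit of effective $\mathbb{Q}$-classes and $A$ is ample. The rest is a direct limit computation.
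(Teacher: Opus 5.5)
Your proposal is correct and follows the paper's proof. The finite-level bound comes from \eqref{eq:general-mL-bound} together with $\lceil (n+1)m\lambda\rceil\le m(n+1)$, and the volume bound comes from dividing \eqref{eq:general-mL-bound} by $m^n/n!$ and letting $m\to\infty$; your separate treatment of non-big pseudoeffective $L$ (via $L\cdot A^{n-1}\geq 0$) and the explicit estimate $\binom{n+q}{n}\le (q+n)^n/n!$ just fill in details the paper leaves implicit.
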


\begin{proof}
Fix $m\geq1$.  If $h^0(X,mL)=0$, the finite-level assertion is trivial.
Otherwise Theorem~\ref{thm:general-line-bundle-bound}, applied to $M=mL$,
also gives
\[
  \alpha:=
  \frac{L\cdot A^{n-1}}{(-K_X)\cdot A^{n-1}}
  \geq0.
\]
Since $\alpha\leq1$, one has
$\lceil (n+1)m\alpha\rceil\leq(n+1)m$, and the finite-level bound follows
from \eqref{eq:general-mL-bound}.  The volume estimate follows by dividing
the same bound by $m^n/n!$ and letting $m\to\infty$.
\end{proof}

\begin{proof}[Proof of Theorem~\ref{thm:semistable-tangent-quantized-bound}]
  Fix an arbitrary integer $m\geq1$, and set
  \[
    L=mH,
    \qquad
    W=H^0(X,L),
    \qquad
    N=h^0(X,L),
    \qquad
    V=H^n.
  \]
  The assertion is immediate if $N=0$, so assume $N>0$.
  Lemma~\ref{lem:symmetric-powers-semistable} implies that the dual of a
  slope-semistable vector bundle is slope-semistable.  Hence the assumed
  $\mu_H$-semistability of $T_X$ implies that of $\Omega_X^1$, and
  \[
    \mu_H(\Omega_X^1)
    =\frac{K_X\cdot H^{n-1}}{n}
    =-\frac{V}{n},
    \qquad
    \deg_H(L)=mV.
  \]
  Lemma~\ref{lem:jet-filtration} therefore supplies a nonempty open set
  $U_m\subseteq X$ such that, for $p\in U_m$,
  \begin{equation}\label{eq:fano-jet-upper-bound}
    T_p(W)\leq mnN.
  \end{equation}

  Put
  \[
    q:=m(n+1),
    \qquad
    D:=\binom{n+q}{n}.
  \]
  Notice the identity
  \begin{equation}\label{eq:binomial-m-plus-one}
    \binom{n+q+1}{n+1}
    =\frac{n+q+1}{n+1}\binom{n+q}{n}
    =(m+1)D.
  \end{equation}
  If $N>D$, Lemma~\ref{lem:taylor-capacity},
  \eqref{eq:binomial-m-plus-one}, and $q+1-mn=m+1$ give
  \begin{align*}
    T_p(W)
    &\geq(q+1)N-(m+1)D\\
    &=mnN+(m+1)(N-D)
    >mnN,
  \end{align*}
  contradicting \eqref{eq:fano-jet-upper-bound}.  Thus
  \[
    h^0(X,mH)=N\leq D=\binom{n+m(n+1)}{n}.
  \]

  Suppose now that $N=D$.  Lemma~\ref{lem:taylor-capacity} and
  \eqref{eq:binomial-m-plus-one} yield
  \[
    T_p(W)
    \geq(q+1)D-(m+1)D=mnD.
  \]
  Together with \eqref{eq:fano-jet-upper-bound}, this is an equality.  The
  equality clause of Lemma~\ref{lem:taylor-capacity} shows that $|mH|$
  generates $q=m(n+1)$ jets at $p$.  If $s(mH,p)$ denotes the maximal order of
  jets generated at $p$, then
  \[
    m(n+1)=q\leq s(mH,p)\leq m\varepsilon(H,p).
  \]
  For completeness, the second inequality follows directly from the curve
  definition of the Seshadri constant.  If $|A|$ generates $s$-jets at $p$,
  then, for every irreducible curve $C\ni p$, one may prescribe an order-$s$
  initial form that does not vanish on the projectivized tangent cone of $C$.
  The resulting divisor $D\in|A|$ does not contain $C$ and satisfies
  \[
    A\cdot C=D\cdot C\geq s\,\operatorname{mult}_pC.
  \]
  Taking the infimum over $C$ proves $s(A,p)\leq\varepsilon(A,p)$; applying
  this to $A=mH$ and using
  $\varepsilon(mH,p)=m\varepsilon(H,p)$ gives the displayed inequality.
  Therefore $\varepsilon(-K_X,p)=\varepsilon(H,p)\geq n+1$.  By the
  Bauer--Szemberg characterization of projective space
  \cite[Theorem~1.7]{BauerSzemberg2009}, a smooth Fano $n$-fold different from
  $\mathbb P^n$ has $\varepsilon(-K_X,x)\leq n$ at every point.  Hence
  $X\simeq\mathbb P^n$.

  Finally we treat the case where $(-K_X)^n=(-K_{\mathbb P^n})^n$.
  The proof below was overlooked in the previous version of our manuscript and was
  kindly provided to us by Hanfeng Chu.  The case $n=1$ is immediate, so assume
  $n\geq2$.  For every sufficiently large $m$, let $U_m$ be the nonempty open
  set supplied by Lemma~\ref{lem:jet-filtration} for
  $W_m:=H^0(X,-mK_X)$, and put $N_m:=\dim W_m$.  Choose a very general point
  $p\in\bigcap_{m\gg1}U_m$.  Thus
  \[
    T_p(W_m)\leq mnN_m
  \]
  for every sufficiently large $m$.  Let $\tilde X\xrightarrow{\pi}X$ be the
  blowup of $p$, with exceptional divisor $E$.  Then, for $a\geq0$,
  \[
    H^0(X,-mK_X\otimes\mathfrak m_p^a)
    =H^0(\tilde X,\pi^*(-mK_X)-aE).
  \]
  In the notation of \cite[Sections~2.6]{BlumJonsson2020}, the corresponding finite-level expected
  vanishing order is
  \[
    S_m(E):=\frac{1}{mN_m}T_p(W_m)
    =\frac{1}{m h^0(X,-mK_X)}
      \sum_{a\geq1}h^0(\tilde X,\pi^*(-mK_X)-aE).
  \]
  Hence $S_m(E)\leq n$.  Letting $m\to\infty$ and using
  \cite[Lemma 2.9]{BlumJonsson2020}, we obtain
  \[
    S(E):=\frac{1}{\mathrm{vol}(H)}
      \int_0^\infty\mathrm{vol}(\pi^*(-K_X)-xE)\,dx\leq n.
  \]
  On the other hand, Fujita's point-blowup estimate
  \cite[Theorem~2.3(1)]{Fujita2018} gives
  \[
    \begin{aligned}
      S(E)
      &\geq\frac{1}{\mathrm{vol}(-K_X)}
        \int_0^{\mathrm{vol}(-K_X)^{1/n}}
        \bigl(\mathrm{vol}(-K_X)-x^n\bigr)\,dx \\
      &=\frac{n}{n+1}\mathrm{vol}(-K_X)^{1/n}=n,
    \end{aligned}
  \]
  where we used $\mathrm{vol}(-K_X)=(-K_X)^n=(n+1)^n$ in the last
  equality.  Thus equality $S(E)=n$ holds, forcing that
  \[
    \mathrm{vol}(\pi^*(-K_X)-xE)=(n+1)^n-x^n
    \qquad\text{for every }x\in[0,n+1].
  \]
  Fujita's characterization
  \cite[Theorem~2.3(2)]{Fujita2018} now gives
  $\varepsilon(-K_X,p)=n+1$, and hence
  \cite[Theorem~1.7]{BauerSzemberg2009} yields
  $X\simeq\mathbb P^n$.
\end{proof}

\begin{corollary}[=Theorem \ref{thm:main-intro}]\label{cor:K-semistable-quantized-bound}
  Let $X$ be a smooth complex $K$-semistable Fano variety.  Then the conclusion
  of Theorem~\ref{thm:semistable-tangent-quantized-bound} holds for every
  $m\geq1$.
\end{corollary}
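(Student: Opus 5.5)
The plan is to reduce the corollary to Theorem~\ref{thm:semistable-tangent-quantized-bound}. That theorem needs only one hypothesis beyond smoothness and the Fano condition: $T_X$ must be $\mu_H$-semistable for $H=-K_X$. So the whole task is to check that $K$-semistability supplies this hypothesis. The quantized bound, the equality characterization at a fixed level $m$, and the volume-equality case then follow verbatim from the theorem.

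The key step is to invoke the implication from $K$-semistability to slope semistability of the tangent bundle, namely \cite[Theorem~1.3]{Li2021}. This is Li's algebraic extension of Tian's analytic result \cite{Tian1992}. It states that if $X$ is a smooth $K$-semistable Fano manifold, then $T_X$ is slope semistable with respect to $-K_X$.

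I would check two conventions against the paper's. First, the slope in Li's statement should be the slope of the first Chern class paired with $(-K_X)^{n-1}$. Second, semistability should be tested against all coherent subsheaves of intermediate rank; testing saturated ones is equivalent. These match the definitions fixed at the start of Section~\ref{sec:jet-filtration}, so no translation is needed.

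Once Li's theorem is in hand, I would apply Theorem~\ref{thm:semistable-tangent-quantized-bound} with $H=-K_X$. This gives $h^0(X,-mK_X)\leq\binom{n+m(n+1)}{n}$ for every $m\geq1$, and shows that equality for a single $m$ forces $X\simeq\mathbb P^n$. That is exactly Theorem~\ref{thm:main-intro}.

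The only potential obstacle is making sure the cited implication applies in the generality needed. It must cover $K$-semistability, not just $K$-polystability or the existence of a Kähler--Einstein metric. Li's theorem is stated for $K$-semistable smooth Fano manifolds, so I expect no difficulty.

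If one preferred to avoid that citation, a fallback for the volume and rigidity parts is to use Fujita's theorem \cite{Fujita2018} directly. That route does not give the finite-level bound, however, so the argument genuinely relies on \cite[Theorem~1.3]{Li2021}.
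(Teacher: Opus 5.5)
Your proposal is correct and follows the paper's proof: the paper also cites Li's theorem \cite[Theorem~1.3]{Li2021} to obtain $\mu_{-K_X}$-semistability of $T_X$ from $K$-semistability. It then applies Theorem~\ref{thm:semistable-tangent-quantized-bound} with $H=-K_X$.
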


\begin{proof}
  Li's theorem \cite[Theorem~1.3]{Li2021}, extending Tian's
  K\"ahler--Einstein/\allowbreak Ricci-lower-bound results on the canonical extension
  \cite[Theorems~0.1--0.2]{Tian1992}, says that the tangent bundle of a
  smooth $K$-semistable Fano variety is $\mu_{-K_X}$-semistable.  Applying
  Theorem~\ref{thm:semistable-tangent-quantized-bound}, we conclude.
\end{proof}
\begin{remark}
Finally we point our that the same argument extends to $K$-semistable $\mathbb Q$-Fano
varieties after the standard reflexive modifications.  If
$j:X_{\mathrm{reg}}\hookrightarrow X$ and
\[
 \mathcal L_m:=\mathcal O_X(-mK_X)
 :=\bigl(\mathcal O_X(-K_X)^{\otimes m}\bigr)^{**},
\]
one constructs the principal-parts filtration on $X_{\mathrm{reg}}$, where
$\mathcal L_m$ is locally free, and then replaces $\Omega_X^1$,
symmetric powers, tensor products, and the graded images by their reflexive
counterparts on $X$.  Since a normal variety is regular in codimension one,
the torsion and cokernels introduced by reflexive extension are supported in
codimension at least two and do not affect ranks, determinants, first Chern
classes, or slopes.  Semistability of the reflexive tangent sheaf is supplied
by \cite[Theorem~6(i) and Remark~4]{DruelGuenanciaPaun2024}; the required
semistability of reflexive symmetric powers follows by restriction to a
general complete-intersection curve and the characteristic-zero tensor
product theorem, as in
\cite{Flenner1984,RamananRamanathan1984,HuybrechtsLehn2010}.

The jet counting argument itself is unchanged, since it is applied at a
general smooth point.  Consequently the same computation gives
\[
 h^0\bigl(X,\mathcal O_X(-mK_X)\bigr)
 \leq \binom{n+m(n+1)}{n}
\]
for every $m\geq1$, including values for which $-mK_X$ is not Cartier.  In
the equality case, a section of the reflexive anticanonical power defines an
effective $\mathbb Q$-Cartier Weil divisor; the jet-isomorphism at the chosen
smooth point yields $\varepsilon(-K_X,p)\geq n+1$, and the singular
Seshadri characterization of projective space
\cite[Theorem~2]{LiuZhuang2018} gives $X\simeq\mathbb P^n$.
\end{remark}

\end{document}